\documentclass[11pt]{article}
\usepackage{amsmath, amsfonts, amssymb, amsthm,epsfig}
 \usepackage[ps2pdf,linktocpage,bookmarks,bookmarksnumbered,bookmarksopen]{hyperref}
\hypersetup{%
   pdftitle   ={},
   pdfsubject ={Manuscript},
   pdfauthor  ={Frank Aurzada and Christoph Baumgarten},
   pdfkeywords={}
      }

\setlength{\hoffset}{-1.0cm} \setlength{\voffset}{-1.1cm}



\let\BFseries\bfseries\def\bfseries{\BFseries\mathversion{bold}} 
\DeclareMathSymbol{\leqslant}{\mathalpha}{AMSa}{"36} 
\DeclareMathSymbol{\geslant}{\mathalpha}{AMSa}{"3E} 
\DeclareMathSymbol{\eset}{\mathalpha}{AMSb}{"3F}     
\newcommand{\abs}[1]{\lvert#1\rvert}

\newcommand{\ind}{1\hspace{-0.098cm}\mathrm{l}}



\newcommand{\be}{\begin{eqnarray*}}
\newcommand{\ee}{\end{eqnarray*}}
\newcommand{\ben}{\begin{eqnarray}}
\newcommand{\een}{\end{eqnarray}}

\theoremstyle{plain}
\newtheorem{thm}{Theorem}
\newtheorem{lem}[thm]{Lemma}
\newtheorem{prop}[thm]{Proposition}

\theoremstyle{definition}

\newtheorem{remark}[thm]{Remark}

\renewenvironment{proof}[1][] {\smallskip \noindent {\bf Proof#1.} }{\hspace*{\fill}$\square$\medskip\par}
\def\P{{\bf {\mathbb{P}}}}
\newcommand{\pr}[1]{\P\left[#1\right]}

\newcommand{\E}[1]{\mathbb{E}\left[#1\right]}

\def\d{{\rm d}}

\newcommand{\set}[1]{\left\lbrace #1 \right\rbrace}

\textheight 9.00in
\oddsidemargin 0.25in
\evensidemargin 0.00in
\textwidth 6.15in

\begin{document}
\title{{\sc Persistence of fractional Brownian motion with moving boundaries and applications }}
\author{\renewcommand{\thefootnote}{\arabic{footnote}} {\sc Frank Aurzada}\footnotemark[1]\\
\renewcommand{\thefootnote}{\arabic{footnote}}{\sc Christoph Baumgarten}\footnotemark[1]}
\date{\today}

\footnotetext[1]{
Technische Universit\"at Braunschweig, Institut f\"ur Mathematische Stochastik, 
Pockelsstr.\ 14, 38106 Braunschweig, Germany,
{\sl f.aurzada@tu-braunschweig.de}, 
{\sl baumgart@math.tu-berlin.de}
}
\maketitle
%

\begin{abstract}
\noindent
We consider various problems related to the persistence probability of fractional Brownian motion (FBM), which is the probability that the FBM $X$ stays below a certain level until time $T$.\\
Recently, Oshanin et al.\ (\cite{o-r-s:2012}) study a physical model, where persistence properties of FBM are shown to be related to scaling properties of a quantity $J_N$, called steady-state current. It turns out that for this analysis it is important to determine persistence probabilities of FBM with a moving boundary.\\
We show that one can add a boundary of logarithmic order to a FBM without changing the polynomial rate of decay of the corresponding persistence probability which proves a result needed in \cite{o-r-s:2012}. Moreover, we complement their findings by considering the continuous-time version of $J_T$. Finally, we use the results for moving boundaries in order to improve estimates by Molchan (\cite{molchan:1999a}) concerning the persistence properties of other quantities of interest, such as the time when a FBM reaches its maximum on the time interval $(0,1)$ or the last zero in the interval $(0,1)$.  \\ \\
\noindent
{\it PACS numbers:} 02.50.Cw, 02.50.Ey, 05.40.-a. \\

\noindent
{\it Key words and phrases.} Fractional Brownian motion, moving boundary, one-sided barrier problem, one-sided exit problem, persistence, small value probability, survival exponent.\\
\end{abstract}

\section{Introduction}
Given a real-valued stochastic process $(Z_t)_{t \ge 0}$, consider the persistence or survival probability up to time $T$ given by
\begin{equation*}
   p(T) := \pr{ Z_t \leq 1, \forall t \in [0,T]}, \quad T > 0.
\end{equation*}
For many relevant stochastic processes, it decreases polynomially (modulo terms of lower order), i.e.\ $p(T) = T^{-\theta + o(1)}$ as $T \to \infty$, and $\theta > 0$ is called the persistence or survival exponent. Persistence probabilties are related to many problems in physics and mathematics, see the surveys \cite{majumdar:1999} and \cite{aurzada-simon:2012} for a collection of results, applications, and examples.\\
In this article, we discuss persistence probabilities related to fractional Brownian motion (FBM). Recall that FBM with Hurst index $H \in (0,1)$ is a centered Gaussian process $(X_t)_{t \in \mathbb{R}}$ with covariance 
\[
  \E{X_s X_t} = \frac{1}{2} \left( \abs{s}^{2H} + \abs{t}^{2H} - \abs{t-s}^{2H} \right), \quad s,t \in \mathbb{R}.
\]
We remark that $X$ has stationary increments and is self-similar of index $H$, i.e.\ $(X_{ct})_{t \in \mathbb{R}}$ and $(c^H X_t)_{t \in \mathbb{R}}$ have the same distribution for any $c > 0$. Let us remark that $X$ is non-Markovian unless $H=1/2$ (see e.g.\ \cite{mandelbrot-vanness:1968}). \\
Since the behavior of many dynamical systems exhibits long-range correlations, one observes so-called \textit{anomalous dynamics} which are typically characterized by a nonlinear growth in time (i.e.\ $\E{X_t^2} \propto t^{2H}$ where $H \neq 1/2$) where $X$ models the evolution of the corresponding quantity (\cite{bouchoud-georges:1990}). In order to take such features into account, FBM has been proposed in \cite{mandelbrot-vanness:1968} in 1968. For instance, FBM has been used in polymers models (\cite{zoia-rosso-majumdar:2009, w-f-c-v:2012}) and in finance to describe longe-range dependence of stock prices and volatility (\cite{comte-renault:1998, oksendal:2007}). We also refer to \cite{eliazar-klafter:2008} and \cite{eliazar-shlesinger:2012} where the emergence of FBM in certain complex systems is investigated.\\
The study of persistence for this process has been motivated by the analysis of Burgers equation with random inital conditions (\cite{sinai:1992-a}) and the linear Langevin equation (\cite{krug-et-al:1997}). Sina{\u\i} also derived estimates on the persistence probability in a subsequent article (\cite{sinai:1997}). The exponent was shown to equal $\theta = 1 - H$ by \cite{molchan:1999a}, where $H$ is the Hurst parameter of the FBM. The estimates on the persistence probability have recently been improved in \cite{aurzada:2011}, who showed the following: there is a constant $c = c(H) > 0$ such that, for $T$ large enough,
\begin{equation}\label{eq:survival_prob_FBM}
   T^{-(1-H)} (\log T)^{-c} \precsim \pr{X_t \le 1, 0 \le t \le T} \precsim T^{-(1-H)} (\log T)^{c}, \quad T \to \infty.
\end{equation}
The notation $f(x) \precsim g(x)$ as $x \to x_0$ means that $\limsup_{x \to x_0} f(x)/g(x) < \infty$, whereas we write $f(x) \sim g(x)$ ($x \to x_0$) if $f(x)/g(x) \to 1$ as $x \to x_0$. However, it is still an open problem to show that $p(T) \asymp T^{-(1-H)}$ where $f(T) \asymp g(T)$ means that the ratio $f(T)/g(T)$ is bounded away from zero and infinity for large values of $T$. Note that in view of the self-similarity, \eqref{eq:survival_prob_FBM} translates into
\begin{equation}\label{eq:small_value_max_FBM}
   \abs{\log \epsilon}^{-c} \epsilon^{(1 - H)/H} \precsim \pr{X_t \le \epsilon, t \in [0,1]} \precsim \abs{\log \epsilon}^{c} \epsilon^{(1 - H)/H}, \quad \epsilon \downarrow 0.
\end{equation}
Let us remark that the persistence exponent of another non-Markovian process with similar properties, namely self-similarity and stationarity of increments, has been computed recently in \cite{castell-et-al:2012}, confirming results in \cite{redner:1997,majumdar:2003}.\\
The main motivation of this article comes from a physical model involving FBM that has been studied recently in \cite{o-r-s:2012} as an extension to the Sina{\u\i} model. If $(X_t)_{t \ge 0}$ denotes a FBM with Hurst index $H$, the authors are interested in the asymptotics of the $k$-th moment $\E{J_N^k}$ of the quantity $J_N$, called the steady-state current $J_N$ through a finite segment of length $N$, given by
\[
   J_N := \frac{1}{2} \, \left( 1+ \sum_{n=1}^{N-1} \exp(X_n) \right)^{-1}.
\]
Oshanin et al.\ find that $\E{J_N^k} = N^{-(1-H) + o(1)}$ as $N \to \infty$ for any $k > 0$. In particular, the exponent is independent of $k$. In order to prove the lower bound, the authors need the following estimate: If $Y_0,Y_1 > 0$ is some constant, then
\begin{equation}\label{eq:FBM_moving_bound_in_physics_paper}
    N^{-(1-H)} (\log N)^{-c} \precsim \pr{X_n \le Y_0 - Y_1 \log(1+n), \, \forall n = 1,\dots,N}, \qquad N \to \infty.
\end{equation}
In general, the following question arises: What kind of functions $f$ are admissible such that $\pr{X_t \le f(t), \forall t \in [0,T]} = T^{-(1-H) + o(1)}$, i.e.\ what kind of moving boundaries $f$ do not change the persistence exponent of a FBM? Given the increasing relevance of FBM for various applications, it is important to understand such questions since they convey information about the path behavior of FBM. In this article, we take a further step in this direction. Let us now briefly summarize our main results.
\begin{itemize}
   \item We study the persistence probability of FBM involving a moving boundary that is allowed to increase or decrease like some power of a logarithm. Our results show that the presence of such a boundary does not change the persistence exponent of FBM, and \eqref{eq:FBM_moving_bound_in_physics_paper} will follow as a special case. 
\begin{prop}\label{prop:FBM_log_boundary}
   Let $Y_0,Y_1>0$ and $X$ denote a FBM with Hurst index $H \in (0,1)$.
\begin{enumerate}
   \item  For any $\gamma \ge 1$, there is a constant $c = c(H,\gamma) >0$ such that, for $T$ large enough,
$$
T^{-(1-H)} (\log T)^{-c} \precsim \pr{X_s\leq Y_0 - Y_1 (\log (1 + s))^\gamma, 0\leq  s\leq T} \precsim T^{-(1-H)}.
$$
\item  For any $\gamma > 0$, there is a constant $c = c(H,\gamma) > 0$ such that, for $T$ large enough,
$$
T^{-(1-H)} (\log T)^{-c} \precsim \pr{X_s\leq Y_0 + Y_1 (\log (1 + s))^\gamma, 0\leq  s\leq T} \precsim T^{-(1-H)} (\log T)^{c}.
$$
\end{enumerate}
\end{prop}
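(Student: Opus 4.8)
The plan is to reduce all four estimates to the constant-boundary bound \eqref{eq:survival_prob_FBM} (equivalently its self-similar form \eqref{eq:small_value_max_FBM}), using throughout the scaling identity $\pr{X_s\le c,\,0\le s\le T}=\pr{X_u\le cT^{-H},\,u\in[0,1]}$ together with the trivial monotonicity $\pr{X_s\le f(s),\,s\le T}\le\pr{X_s\le g(s),\,s\le T}$ for $f\le g$. Part~(2) is entirely of this type. For the lower bound the boundary satisfies $Y_0+Y_1(\log(1+s))^\gamma\ge Y_0$, so the event contains $\{X_s\le Y_0,\,s\le T\}$; rescaling and applying the lower half of \eqref{eq:small_value_max_FBM} with $\eps=Y_0T^{-H}$ gives $\succsim T^{-(1-H)}(\log T)^{-c}$. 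For the upper bound the boundary is \emph{increasing}, hence bounded on $[0,T]$ by $M(T):=Y_0+Y_1(\log(1+T))^\gamma$; rescaling and applying the upper half of \eqref{eq:small_value_max_FBM} with $\eps=M(T)T^{-H}$ gives $\precsim(\log T)^{c}T^{-(1-H)}$, the surplus power of $\log T$ being produced by $M(T)^{(1-H)/H}\asymp(\log T)^{\gamma(1-H)/H}$ and $\abs{\log\eps}^{c}\asymp(\log T)^{c}$. So part~(2) requires nothing beyond pinching the boundary between its minimum and maximum over $[0,T]$.

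Part~(1) is the substantial one, since a decreasing boundary \emph{cannot} be pinched between two constant levels: replacing $f(s)=Y_0-Y_1(\log(1+s))^\gamma$ by its minimum $f(T)<0$ would demand $X_0=0\le f(T)$, which is impossible, so the constraint near $s=0$ must be kept. The upper bound $\precsim T^{-(1-H)}(\log T)^{c}$ is still free from $f\le Y_0$ and the rescaled \eqref{eq:small_value_max_FBM}, so the two genuine tasks are (i)~to bound the probability below by $T^{-(1-H)}(\log T)^{-c}$, and (ii)~to \emph{remove} the logarithmic factor from the upper bound.

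For task~(i) the guiding observation is that the total descent $Y_1(\log(1+T))^\gamma$ of the boundary is negligible against the fluctuation scale $T^{H}$ of $X$, so a descent of polylogarithmic size should cost only a polylogarithmic factor. I would realize this by writing $A:=\{X_s\le f(s),\,s\le T\}$ as the Cameron--Martin translate $A=B-h$ of the constant-boundary set $B:=\{X_s\le Y_0,\,s\le T\}$, where $h(s)=Y_1(\log(1+s))^\gamma$. The Gaussian isoperimetric inequality (equivalently, the $\norm{h}_{\cH}$-Lipschitz continuity of $\Phi^{-1}\circ\pr{\,\cdot\,}$ under Cameron--Martin shifts, with $\Phi$ the standard Gaussian distribution function and $\cH$ the Cameron--Martin space of $X$ on $[0,T]$) then yields $\Phi^{-1}(\pr{A})\ge\Phi^{-1}(\pr{B})-\norm{h}_{\cH}$. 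The role of the hypothesis $\gamma\ge1$ is to guarantee that $h\in\cH$ with norm bounded uniformly in $T$: near $s=0$ one has $h(s)\asymp s^{\gamma}$, which is Lipschitz precisely when $\gamma\ge1$, while the logarithmic growth at infinity makes the relevant (fractional) derivative of $h$ decay fast enough to keep $\norm{h}_{\cH([0,T])}$ bounded. Combining $\norm{h}_{\cH}=O(1)$, the estimate \eqref{eq:survival_prob_FBM} for $\pr{B}$, and $\Phi^{-1}(p)\sim-\sqrt{2\log(1/p)}$, one converts the additive shift into a multiplicative factor. \emph{The main obstacle} is exactly this Cameron--Martin step: the long-range dependence of $X$ makes $\cH$ awkward to control for $H\neq 1/2$, so genuine care is needed both to verify $h\in\cH$ and to see that the loss is no worse than a power of $\log T$. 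Indeed, a crude use of isoperimetry only produces the weaker factor $e^{-c\sqrt{\log T}}$; to sharpen it to $(\log T)^{-c}$ I would instead decompose $[0,T]$ into dyadic blocks $[2^{k},2^{k+1}]$, on each of which $f$ is nearly constant and self-similarity applies, and control the (correlated) blocks via a Slepian- or Gaussian-correlation-type inequality so that the per-scale cost is summable.

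For task~(ii) the bound $f\le Y_0$ wastes the descent entirely, and restricting to a late subinterval loses half the exponent, so the improvement must use the whole interval together with the descent. I would therefore invoke Molchan's method, which bounds the persistence probability directly in terms of the law of the time at which $X$ attains its maximum and is designed to produce the exponent $1-H$ without the logarithmic overhead carried by the block/union arguments behind \eqref{eq:survival_prob_FBM}; the extra room created by the decreasing boundary is what should let this argument close without a residual power of $\log T$. I expect this removal of the logarithm, rather than the essentially routine reductions above, to be the most delicate point of the proof.
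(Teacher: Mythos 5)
Your treatment of part 2 and of the log-corrected upper bound in part 1 is correct and matches the paper's reductions (pinching the boundary and rescaling via \eqref{eq:small_value_max_FBM}). Both genuinely hard steps of part 1, however, have gaps.

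\emph{Lower bound of part 1.} You rightly concede that the Cameron--Martin/isoperimetric route only yields a loss of order $e^{-c\sqrt{\log T}}$ (the paper makes the same point in its remark, citing \cite{aurzada-dereich:2011} and \cite{molchan:2012}), but your proposed repair does not work. A Slepian factorization over the dyadic blocks $[2^k,2^{k+1}]$ bounds $\pr{X_s\le f(s),\,s\le T}$ below by a product of roughly $\log_2 T$ factors; after rescaling by $2^{-kH}$, the boundary on the $k$-th block becomes $\approx -Y_1(k\log 2)^\gamma 2^{-kH}\to 0$, so each factor converges to the constant $c_0:=\pr{X_u\le 0,\ 1\le u\le 2}<1$. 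The product is then of order $c_0^{\log_2 T}=T^{-\log_2(1/c_0)}$: a polynomial whose exponent has nothing to do with $1-H$ (for $H=1/2$ one computes $c_0=1/4$, i.e.\ exponent $2$ instead of $1/2$). The per-scale cost is a fixed constant per block, not a summable one, and no Gaussian correlation inequality changes this; block decompositions of this kind cannot reproduce the exact persistence exponent. What the paper does instead (Lemma~\ref{lem:FBM_barrier_lower_bound}) is split \emph{once}, at the single polylogarithmic time $s_0(\log T)^{\alpha/H}$, use self-similarity to rescale the long remaining interval so that the boundary becomes the constant $-K$, reduce that constant negative boundary to the level-one boundary (Slepian for $H\ge 1/2$; Lemma~5 of \cite{aurzada:2011} for $H<1/2$), and apply \eqref{eq:survival_prob_FBM}. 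This leaves the short-interval factor $g(s_0(\log T)^{\alpha/H})$, which is controlled by a delicate bootstrap: the inequality $g(T)\ge g(s_0(\log T)^\beta)\,T^{-(1-H)}(\log T)^{-c}$ is iterated along iterated logarithms to obtain a preliminary bound $g(T)\ge T^{-\theta_1}$, which is then re-inserted once to give the final polylogarithmic loss. Nothing in your sketch plays the role of this iteration.

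\emph{Upper bound of part 1.} Here you only gesture at ``Molchan's method'' and express the hope that the decreasing boundary lets it close; that is not an argument, and in fact this is the \emph{easy} part of the paper's proof (Lemma~\ref{lem:FBM_barrier_upper_bound}). The mechanism is concrete: $\E{J_T}\precsim T^{-(1-H)}$ by \cite[Statement~1]{molchan:1999a}; restricting this expectation to the event $\set{X_s\le bf(s),\,s\le T}$, on which $\int_0^T e^{X_s}\,\d s\le\int_0^\infty e^{bf(s)}\,\d s<\infty$, gives $\pr{X_s\le bf(s),\,s\le T}\precsim T^{-(1-H)}$, and a final rescaling removes $b$. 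This is also precisely where the hypothesis $\gamma\ge 1$ enters: $\int_0^\infty \exp\left(b(Y_0-Y_1(\log(1+s))^\gamma)\right)\d s$ is finite for $\gamma>1$ (any $b>0$), for $\gamma=1$ provided $bY_1>1$, and for no $b$ when $\gamma<1$. Your attribution of $\gamma\ge 1$ to Cameron--Martin membership of $h(s)=Y_1(\log(1+s))^\gamma$ is therefore misplaced: it concerns your (unworkable) lower-bound route, it is not even the right threshold for $H=1/2$ (there $h'\in L^2$ requires only $\gamma>1/2$), and in the paper the lower bound in fact holds for every $\gamma>0$.
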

\item Considering the continuous-time version of $J$, we prove the following result:
\begin{prop}\label{prop:moment_disorder_current_functional}
  Set
\[
   J_T := \left( \int_0^T e^{X_s} \d s \right)^{-1}, \quad T > 0.
\]
For any $k > 0$, there is $c = c(H) > 0$ such that 
\begin{equation}
   T^{-(1-H)} (\log T)^{-c} \precsim \E{J_T^k} \precsim T^{-(1-H)} (\log T)^c, \qquad T \to \infty.
\end{equation}
\end{prop}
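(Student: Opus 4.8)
The plan is to reduce the statement to persistence probabilities of FBM below a slowly varying boundary and then to invoke Proposition~\ref{prop:FBM_log_boundary} together with the small--value estimate \eqref{eq:small_value_max_FBM}. Write $I_T := \int_0^T e^{X_s}\d s$, so that $\E{J_T^k} = \E{I_T^{-k}}$, and recall the layer--cake identity for negative moments,
\[
   \E{J_T^k} = k\int_0^\infty v^{-k-1}\,\pr{I_T < v}\,\d v .
\]
Everything therefore rests on good bounds for $\pr{I_T < v}$, uniformly in $v$.

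For the lower bound I would fix $\gamma > 1$ and apply Proposition~\ref{prop:FBM_log_boundary}(1) to the event $A_T := \{X_s \le 1 - (\log(1+s))^\gamma,\ 0 \le s \le T\}$. Since $\gamma>1$, the function $s \mapsto e^{1-(\log(1+s))^\gamma}$ is integrable on $[0,\infty)$, so on $A_T$ one has $I_T \le C_0 := \int_0^\infty e^{1-(\log(1+s))^\gamma}\d s < \infty$ and hence $J_T^k \ge C_0^{-k}$. Consequently $\E{J_T^k} \ge C_0^{-k}\,\pr{A_T}$, and Proposition~\ref{prop:FBM_log_boundary}(1) bounds $\pr{A_T}$ from below by a constant times $T^{-(1-H)}(\log T)^{-c}$, which is the desired lower estimate.

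For the upper bound the key observation is that a small value of $I_T$ forces $X$ to stay low: if $I_T < v$ then for every level $b$ one has $e^b\,\bigl|\{s \in [0,T] : X_s > b\}\bigr| \le I_T < v$, so that $X$ spends all but at most $v e^{-b}$ of the time below $b$. Taking $b$ of order $\log v$ turns $\{I_T<v\}$ into an essentially persistence--type event below a level $\approx \log v$. By self--similarity, $\pr{X_s \le b,\ 0 \le s \le T} = \pr{X_r \le b\,T^{-H},\ 0 \le r \le 1}$, and for $b$ of order at most $\log T$ this is $\precsim (b\,T^{-H})^{(1-H)/H}(\log T)^c \approx T^{-(1-H)}(\log T)^c$ by \eqref{eq:small_value_max_FBM}; for boundaries growing like a power of $\log$ one may instead quote Proposition~\ref{prop:FBM_log_boundary}(2). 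Splitting the layer--cake integral at $v_0 := T^{(1-H)/k}$, the tail $\int_{v_0}^\infty v^{-k-1}\pr{I_T<v}\d v \le \int_{v_0}^\infty v^{-k-1}\d v = v_0^{-k}/k = T^{-(1-H)}/k$ is harmless, while on $\{v \le v_0\}$ the level $\log v$ stays of order $\log T$ and the above persistence estimate applies.

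The main obstacle is the behaviour of the remaining integral as $v \dto 0$, where the weight $v^{-k-1}$ is not integrable: the factor $T^{-(1-H)}(\log T)^c$ alone is not enough, and one needs an extra decay in $v$. This decay must come from the pinning $X_0 = 0$: to make $I_T$ as small as $v$ the path has to drop fast near the origin, and $\int_0^1 e^{X_s}\d s \ge e^{\int_0^1 X_s \d s}$ shows that $\pr{I_T<v}$ decays faster than any power of $v$. The delicate point is that this short--time cost and the long--time persistence cost must be shown to \emph{multiply}, i.e.\ $\pr{I_T<v} \precsim \psi(v)\,T^{-(1-H)}(\log T)^c$ with $\int_0^1 v^{-k-1}\psi(v)\,\d v < \infty$; I would obtain this by decomposing $[0,T]$ at a fixed time and restarting the FBM via its stationary increments, estimating the contribution of $[0,1]$ (the $v$--decay) and of $[1,T]$ (the persistence decay) separately. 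Establishing this factorisation rigorously, and thereby converting ``the integral is small'' into a genuine small--value statement uniformly in $v$ (which also requires passing from the occupation--time bound above to an honest persistence exponent), is the crux of the argument.
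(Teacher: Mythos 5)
Your lower bound is complete and is exactly the paper's argument: restrict to the event $\{X_s \le 1-(\log(1+s))^\gamma,\ s\in[0,T]\}$ with $\gamma>1$, bound $I_T$ by a finite constant on that event, and invoke Proposition~\ref{prop:FBM_log_boundary}(1). The upper bound, however, contains a genuine gap, and you name it yourself: the factorisation $\pr{I_T<v}\precsim \psi(v)\,T^{-(1-H)}(\log T)^c$ with $\int_0^1 v^{-k-1}\psi(v)\,\d v<\infty$ is never established, and without it the layer-cake integral diverges at $v\dto 0$, so nothing is proved. Worse, the route you propose to fill it --- splitting $[0,T]$ at a fixed time and ``restarting the FBM via its stationary increments'' --- is unlikely to work as stated: FBM is not Markovian, the path on $[1,T]$ is correlated with the path on $[0,1]$, and the available comparison tools point the wrong way. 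Slepian's inequality bounds joint probabilities of staying-below events from \emph{below} by products (that is how the paper uses it in its lower-bound lemmas); for your factorisation you need the reverse inequality, which fails in general, and for $H<1/2$ the increments are even negatively correlated. A second unproved step sits earlier in your sketch: the occupation-time observation ($I_T<v$ forces $|\{s:X_s>b\}|\le v e^{-b}$) is not a persistence event, since the exceptional set can be located anywhere in $[0,T]$, and you do not supply the argument converting it into one.

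The paper avoids both difficulties by a different mechanism, and it is worth seeing why it succeeds where your decomposition stalls. By self-similarity, $\E{J_T^k}=T^{-k}\,\E{ e^{-kT^H X_1^*}\bigl(\int_0^1 e^{-T^H(X_1^*-X_s)}\d s\bigr)^{-k}}$, where $X_1^*=\sup_{t\in[0,1]}X_t$. The integral is bounded below by looking only at a window of width $\epsilon$ around the (a.s.\ unique) maximiser and using a modulus-of-continuity random variable $S$ from \cite[Lemma~2.1]{scheutzow:2009}, giving $\E{J_T^k}\le C\,T^{-k}\,\E{ e^{-kT^H X_1^*}\epsilon^{-k}e^{kT^HS\epsilon^\gamma}}$ with an $S$-dependent choice of $\epsilon$. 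The coupling between the ``short-time cost'' (the moments of $S$) and the ``long-time persistence cost'' (the Laplace transform of $X_1^*$) is then broken by H\"older's inequality --- this plays precisely the decoupling role your missing factorisation was supposed to play, and it works because it requires no independence or Markov structure --- after which $\E{e^{-\lambda X_1^*}}$ is controlled by Karamata's Tauberian theorem applied to \eqref{eq:small_value_max_FBM}. The remaining work in the paper is choosing the H\"older exponents $T$-dependent so that all losses stay logarithmic. In short: your lower bound stands as is; your upper bound reduces the claim to an unproven factorisation which, along the route you indicate, would itself require a decoupling tool for non-Markovian Gaussian processes that does not exist in the form you need.
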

Solving the case $k=1$ was actually the key to the computation of the persistence exponent in \cite{molchan:1999a} where it is shown that $\E{J_T} \sim C T^{-(1-H)}$ for some constant $C > 0$. Our proof is based on estimates of the persistence probability of FBM in \cite{aurzada:2011}, an estimate on the modulus of continuity of FBM in \cite{scheutzow:2009} and Proposition~\ref{prop:FBM_log_boundary}.
\item Finally, we discuss various related quantities such as the time when a FBM reaches its maximum on the time interval $(0,1)$, the last zero in the interval $(0,1)$ and the Lebesgue measure of the set of points in time when $X$ is positive on $(0,1)$. If $\xi$ denotes any of these quantities, we are interested in the probability of small values, i.e.\ $\pr{\xi < \epsilon}$ as $\epsilon$ goes to zero. In Proposition~\ref{prop:FBM_related_quantities} below, we improve the estimates given in \cite{molchan:1999a}.
\end{itemize}
These issues are addressed in Sections~\ref{sec:fbm_mov}, \ref{sec:proof_fbm_moment_of_funct} and \ref{sec:fbm_rel_quant} respectively. 
\section{Survival probability of FBM with moving boundaries}\label{sec:fbm_mov}
In this section, we prove Proposition~\ref{prop:FBM_log_boundary}. We need to distinguish between increasing and decreasing boundaries. Let us begin with a simple general upper bound on the probabilitiy that a FBM stays below a function $f$ until time $T$ when $f(x) \to -\infty$ as $x \to \infty$.
\begin{lem}\label{lem:FBM_barrier_upper_bound}
Let $f$ be some measurable function such that there is a constant $b > 0$ such that $\int_0^\infty e^{b f(s)} \, \d s < \infty$. Then
$$
 \pr{X_s\leq f(b^{1/H} s), 0 \le  s \le T} \precsim T^{-(1-H)}.
$$
\end{lem}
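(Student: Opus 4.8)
The plan is to exploit the integrability hypothesis to force the functional $\int_0^T e^{X_s}\dd s$ to be bounded by a deterministic constant on the persistence event, and then to compare against Molchan's asymptotics $\E{J_T} \sim C\,T^{-(1-H)}$ from \cite{molchan:1999a} via a one-line Markov inequality. The factor $b^{1/H}$ in the argument of $f$ is tailored so that the self-similarity scaling converts the weight $e^{bf}$ of the hypothesis into the weight $e^{X}$ of the functional $J$.

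First I would use self-similarity to absorb the constant $b$. Since $(X_{ct})_t$ and $(c^H X_t)_t$ have the same law, choosing $c = b^{-1/H}$ and substituting $t = b^{1/H} s$ gives
\[
  \pr{X_s \le f(b^{1/H} s),\ 0 \le s \le T} = \pr{X_{b^{-1/H}t} \le f(t),\ 0 \le t \le b^{1/H} T} = \pr{X_t \le b f(t),\ 0 \le t \le b^{1/H} T}.
\]
Writing $S := b^{1/H} T$ and $g := b f$, the hypothesis reads $\int_0^\infty e^{g(t)}\dd t =: K < \infty$, and since $S^{-(1-H)} \asymp T^{-(1-H)}$, it suffices to prove $\pr{X_t \le g(t),\ 0 \le t \le S} \precsim S^{-(1-H)}$.

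On the event $B := \set{X_t \le g(t),\ 0 \le t \le S}$ one has the deterministic bound
\[
  \int_0^S e^{X_t}\dd t \le \int_0^S e^{g(t)}\dd t \le K,
\]
so that $J_S = \bigl(\int_0^S e^{X_t}\dd t\bigr)^{-1} \ge 1/K$ on $B$. Hence $\ind_B \le K\, J_S\, \ind_B \le K\, J_S$, and taking expectations yields $\pr{B} \le K\, \E{J_S}$. Inserting $\E{J_S} \sim C\, S^{-(1-H)}$ closes the argument.

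The proof is short, and the only nontrivial external input is Molchan's estimate for $\E{J_T}$; everything else is elementary. The one point requiring care is that the assumption is an integral condition on $e^{bf}$ rather than pointwise decay of $f$, so the entire force of the argument rests on the observation that staying below $g$ forces the exponential integral to remain bounded, which is exactly what turns the moving-boundary persistence bound into a Markov inequality against the already-understood current functional $J$. There is no deep obstacle here beyond correctly matching the scaling factor $b^{1/H}$ to the normalisation of $e^{bf}$ under self-similarity.
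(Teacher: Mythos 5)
Your proof is correct and is essentially the same argument as the paper's: both restrict the expectation of Molchan's current functional $J$ to the persistence event, use the integrability hypothesis to bound the exponential integral by a deterministic constant, and invoke $\E{J_T}\asymp T^{-(1-H)}$, with self-similarity absorbing the factor $b$. The only (cosmetic) difference is that you rescale first and then apply the Markov-type inequality, while the paper does these steps in the reverse order.
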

\begin{proof}
Recall from \cite[Statement~1]{molchan:1999a} that
$$
\lim_{T\to\infty} T^{1-H} \E{ J_T } \in(0,\infty).
$$
Therefore, there is a constant $c > 0$ such that, for $T$ large enough,
\begin{align*}
c \, T^{-(1-H)} & \geq \E{ \frac{1}{\int_0^T e^{X_s} \d s} } \geq \E { \frac{1}{\int_0^T e^{X_s} \d s} \ind_{\{ X_s \leq b f(s), s\leq T\} } } \\
             & \geq \frac{1}{\int_0^T e^{b f(s)} \d s}\, \pr{ X_s \le b f(s), 0 \le s \le T} \\
             & \geq \frac{1}{\int_0^\infty e^{b f(s)} \d s}\,  \pr{ X_{b^{-1/H} s} \leq f(s), 0 \le s \leq T} \\
	     & = C(b) \, \pr{ X_s \leq f(b^{1/H} s), 0 \le s \leq b^{-1/H} T},
\end{align*}
and the lemma follows.
\end{proof}
The next lemma provides a lower bound on the survial probability if the function $f$ does not decay faster than some power of the logarithm.
\begin{lem}\label{lem:FBM_barrier_lower_bound}
   Let $f$ be some measurable, locally bounded function such that $f$ is positive in a vicinity of $0$. Assume that there are constants $T_0, K, \alpha > 0$ such that $f(T) \ge -K (\log T)^\alpha$ for all $T \ge T_0$. Then there is a constant $c >0$ such that
\[
   \pr{X_s \le f(s), 0 \le s \le T} \succsim T^{-(1-H)} (\log T)^{-c}.
\]
\end{lem}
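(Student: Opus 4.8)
The plan is to prove the bound in three stages: first reduce the general boundary $f$ to a canonical decreasing logarithmic boundary, then use self-similarity to turn the problem into a persistence estimate below a small negative level on the unit interval, and finally obtain that estimate by bootstrapping the known lower bound \eqref{eq:survival_prob_FBM} for a positive level.

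First I would replace $f$ by a convenient lower bound. Since $f$ is locally bounded, it is bounded below on the compact set $[0,T_0]$, say $f \ge -M$ there, so the function $\tilde f(s) := -M - K(\log(1+s))^\alpha$ satisfies $\tilde f(s) \le f(s)$ for all $s \ge 0$ (on $[0,T_0]$ because $\tilde f \le -M \le f$, and on $[T_0,\infty)$ because $\tilde f(s) \le -K(\log s)^\alpha \le f(s)$). As $\{X_s \le \tilde f(s)\} \subseteq \{X_s \le f(s)\}$, it suffices to bound $\pr{X_s \le \tilde f(s), 0 \le s \le T}$ from below. Applying self-similarity with scaling factor $T$ (i.e.\ $(X_{Ts})_s \stackrel{d}{=} (T^H X_s)_s$) and using $(\log(1+Ts))^\alpha \le (\log(1+T))^\alpha$ for $s \in [0,1]$, one gets $\pr{X_s \le \tilde f(s), 0 \le s \le T} = \pr{X_s \le T^{-H}\tilde f(Ts), 0 \le s \le 1} \ge \pr{X_s \le -\delta_T, 0 \le s \le 1}$ with $\delta_T := T^{-H}(M + K(\log(1+T))^\alpha) \asymp T^{-H}(\log T)^\alpha$. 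Since $\delta_T^{(1-H)/H} \asymp T^{-(1-H)}(\log T)^{\alpha(1-H)/H}$ and $\log(1/\delta_T) \asymp \log T$, the whole statement follows once I establish the core estimate $\pr{X_s \le -\delta, 0 \le s \le 1} \succsim \delta^{(1-H)/H}(\log(1/\delta))^{-c}$ as $\delta \downarrow 0$.

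By self-similarity once more this core estimate is equivalent to $\pr{X_s \le -1, 0 \le s \le S} \succsim S^{-(1-H)}(\log S)^{-c}$ with $S := \delta^{-1/H} \to \infty$, i.e.\ to the lower bound \eqref{eq:survival_prob_FBM} but with the \emph{negative} level $-1$ in place of $1$. To deduce it from the positive-level bound I would split $[0,S] = [0,1] \cup [1,S]$. On $[0,1]$ I impose the fixed-probability event $\{X_s \le -1,\ 0 \le s \le 1\} \cap \{X_1 \in [-3,-2]\}$. On $[1,S]$ I use stationarity of the increments: the process $W_t := X_{1+t} - X_1$ is again a FBM, and on the above event the requirement $X_{1+t} \le -1$ is implied by $W_t \le 1$; by \eqref{eq:survival_prob_FBM} applied to $W$ one has $\pr{W_t \le 1,\ 0 \le t \le S-1} \succsim S^{-(1-H)}(\log S)^{-c}$.

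The main obstacle is that, because FBM has long-range dependent increments, the event on $[0,1]$ and the event $\{W_t \le 1,\ t \le S-1\}$ are not independent, so the two probabilities cannot simply be multiplied. I would control this dependence by conditioning on the single variable $X_1$: given $X_1 = x$, the future $(X_{1+t})_t$ is Gaussian with a conditional mean that is affine in $x$ and a conditional covariance not depending on the conditioning, and since $\{X_{1+t} \le -1,\ \forall t\}$ is a decreasing event a downward conditional mean can only increase its probability. For $H \ge 1/2$ the positive correlations make the conditional mean point downward after a negative value of $X_1$, which is exactly the favorable direction; for $H < 1/2$ the sign is reversed and one must instead dominate the (polylogarithmically small) upward mean perturbation by an Anderson- or Slepian-type inequality, at the cost of an extra $(\log S)^{-c}$ factor. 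Carrying out this gluing uniformly over all $H \in (0,1)$ is the delicate point; alternatively, one may observe that the method of \cite{aurzada:2011} already yields the lower bound for an arbitrary constant level (in particular the negative one) with the same polylogarithmic correction, which removes the need for the explicit gluing.
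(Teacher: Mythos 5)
Your reduction fails at the very first step, and the failure is fatal rather than technical. For your domination $\tilde f(s)=-M-K(\log(1+s))^\alpha \le -K(\log s)^\alpha$ to hold for all $s\ge T_0$ you must have $M\ge 0$ (let $s\to\infty$; the gap $(\log(1+s))^\alpha-(\log s)^\alpha$ tends to $0$). But then $\tilde f(0)=-M\le 0$ and $\tilde f(s)<0$ for all $s>0$, so the event $\{X_s\le \tilde f(s),\ 0\le s\le T\}$ requires the path to sit below a strictly negative function immediately after time $0$, while $X_0=0$ and FBM a.s.\ takes positive values at arbitrarily small times; hence $\pr{X_s\le \tilde f(s),\ 0\le s\le T}=0$. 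The same defect propagates through your whole chain: $\pr{X_s\le -\delta_T,\ 0\le s\le 1}$, the ``core estimate'' $\pr{X_s\le -\delta,\ 0\le s\le 1}\succsim \delta^{(1-H)/H}(\log(1/\delta))^{-c}$, its rescaled form $\pr{X_s\le -1,\ 0\le s\le S}$, and the ``fixed-probability event'' $\{X_s\le -1,\ 0\le s\le 1\}\cap\{X_1\in[-3,-2]\}$ are all events of probability exactly zero. So the core estimate you reduce to is not merely unproven, it is false, and your lower bounds bound the target probability by $0$. This is exactly why the hypothesis that $f$ is \emph{positive} in a vicinity of $0$ cannot be discarded by monotone domination: any workable reduction must retain a boundary that stays positive near the origin, and that constraint is where the entire difficulty of the lemma lives.

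There is a second, independent gap in your gluing step: even if the negative level were imposed only on $[s_0,S]$ for some $s_0>0$ (so that the events are non-null), conditioning on the single variable $X_1$ does not decouple the two pieces, because FBM is not Markov. The event on $[0,1]$ is measurable with respect to the whole path $(X_s)_{s\le 1}$, not with respect to $\sigma(X_1)$, and the conditional law of $(X_{1+t})_{t\ge 0}$ given the past depends on the entire past path; thus $\pr{A\cap B}$ cannot be computed as $\E{\ind_{A}\,\pr{B\mid X_1}}$. Your fallback (``the method of \cite{aurzada:2011} already yields the bound for a negative level'') is asserted without proof and, as stated from time $0$, is again the null event. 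The paper's proof is built precisely to avoid both traps: it keeps the original boundary $f$ on an initial window $[0,s_0(\log T)^{\alpha/H}]$, separates it from the rest by Slepian's lemma (using $\E{X_sX_t}\ge 0$), and rescales the remaining piece into a constant negative level imposed only on $[s_0,T/(\log T)^{1/H}]$, which is then handled by Slepian again when $H\ge 1/2$ and by Lemma~5 of \cite{aurzada:2011} when $H<1/2$. The price is that the first factor is $g(s_0(\log T)^{\alpha/H})$, the unknown quantity itself at a smaller time, which yields the recursion $g(T)\ge g(s_0(\log T)^\beta)\,T^{-(1-H)}(\log T)^{-c}$; this must then be iterated along iterated logarithms to bootstrap a preliminary polynomial bound before the stated estimate follows. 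Your plan attempts to shortcut both the recursion and the $H<1/2$ case, and both shortcuts break down.
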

\begin{proof}
   Set $g(T):= \pr{X_s\leq f(s), 0 \leq  s\leq T}$ and fix $s_0>0$ (to be chosen later). Since $\E{X_s X_t}\geq 0$ for all $t,s\geq 0$, Slepian's lemma (see \cite{slepian:1962}) yields
\begin{align*}
g(T)\geq \pr{X_s\leq f(s), 0\leq s\leq s_0 (\log T)^{\alpha/H}} \cdot \pr{X_s\leq f(s), s_0 (\log T)^{\alpha/H} \leq s \leq T}.
\end{align*}
Note that
\begin{align}
 & \pr{X_s \leq f(s), s_0 ( \log T)^{\alpha/H} \leq s \leq T}  \notag \\ \notag
&\quad = \pr{ X_{(\log T)^{\alpha / H} s}\leq f\left((\log T)^{\alpha / H} s\right), s_0 \leq s \leq T/(\log T)^{\alpha/H}} \\  \notag
&\quad =\pr{ (\log T)^\alpha X_{s} \leq f\left((\log T)^{\alpha/H} s\right), s_0 \leq s \leq T/(\log T)^{\alpha/H}}\\ 
&\quad =\pr{ X_{s}\leq \frac{f((\log T)^{\alpha/H} s) }{(\log T)^\alpha}, s_0 \leq s \leq T/(\log T)^{\alpha/H}}. \label{eqn:aftre}
\end{align}
Certainly, for all $T$ large enough, 
\begin{align*}
&\inf_{s \in [s_0, T / (\log T)^{\alpha/H}]} \frac{f((\log T)^{\alpha/H} s) }{(\log T)^\alpha} \\
&\quad = \inf_{s \in [s_0, T / (\log T)^{\alpha/H}]} \frac{f((\log T)^{\alpha/H} s) }{(\log [( \log T)^{\alpha/H} s ] )^\alpha} \cdot \frac{( \log [( \log T)^{\alpha/H} s ] )^\alpha}{(\log T)^\alpha} \ge -K.
\end{align*}
Thus, the term in (\ref{eqn:aftre}) can be estimated from below by
\begin{equation}\label{eq:FBM_arbitrary_constant_barrier}
   \pr{ X_{s}\leq -K, s_0 \leq s \leq T/(\log T)^{1/H}}.
\end{equation}
Let us first consider the case $H \ge 1/2$. Recall that the increments of FBM are positively correlated if and only if $H \ge 1/2$, so using Slepian's lemma in the second inequality, we obtain the following lower bound for the term in \eqref{eq:FBM_arbitrary_constant_barrier}:
\begin{align*}
   \pr{X_s \le -K, s_0 \le s \le T} &\ge \pr{X_{s_0} \le -(K+1), \sup_{s \in [s_0,T]} X_s - X_{s_0} \le 1} \\
&\ge \pr{X_{s_0} \le -(K+1)} \cdot \pr{ \sup_{s \in [s_0,T]} X_s - X_{s_0} \le 1} \\
&\ge c(s_0,K) \pr{X_s \le 1, s \in [0,T]}.
\end{align*}
Hence,
\[
 g(T) \ge c(s_0,K) g(s_0 (\log T)^{\alpha/H}) \cdot \pr{X_s \le 1, s \in [0,T]},
\]
and \eqref{eq:survival_prob_FBM} implies that there is $c > 0$ such that, for all large $T$,
\[
   g(T) \ge g(s_0 (\log T)^{\alpha/H}) T^{-(1-H)} (\log T)^{-c}.
\]
Let us now prove that a similar inequality also holds if $H < 1/2$. In this case, we cannot use Slepian's inequality since the increments of FBM are negatively correlated. Applying \cite[Lemmma~5]{aurzada:2011} (and the specific choice of $s_0$ there), the term in \eqref{eq:FBM_arbitrary_constant_barrier} is lower bounded by
$$
\pr{ X_{s} \leq 1, 0 \leq s \leq k\, T/(\log T)^{1/H} (\log \log T)^{1/(4H)}} (\log T)^{-o(1)},
$$
where $k$ is some constant. Finally, by \eqref{eq:survival_prob_FBM}, this term admits the lower bound $T^{-(1-H)} (\log T)^{-c}$
with some appropriate constant $c>0$ and all $T$ large enough. Thus, we have seen that
\begin{equation}
g(T)\geq g(s_0 ( \log T)^{1/H} ) T^{-(1-H)} (\log T)^{-c}
\end{equation}
for some constants $s_0,c>0$.\\
If we combine this result with the case $H \ge 1/2$, this shows that for any $H \in (0,1)$, there are constants $c = c(H), \beta = \beta(H), s_0 = s_0(H) > 0$ such that
\begin{equation}\label{eq:iteration_g}
g(T)\geq g(s_0 ( \log T)^\beta ) T^{-(1-H)} (\log T)^{-c}.
\end{equation}
Using this inequality iteratively, we will prove the preliminary estimate $g(T) \ge T^{-\theta_1}$ for some $\theta_1 > 1 - H$ and all $T$ large enough. Once we have this estimate, \eqref{eq:iteration_g} shows that 
\[
   g(T) \succsim (\log T)^{-(\theta_1 \beta + c)} \, T^{-(1-H)}, \qquad T \to \infty, 
\]
and the proof is complete for all $H \in (0,1)$. \\
Let us now establish the preliminary lower bound. \eqref{eq:iteration_g} implies that if $\beta_1 > \beta$ and $\theta > 1-H$, there is a constant $T_0 \ge 1$ such that
\begin{equation}\label{eq:iteration_g_simple}
   g(T) \ge g((\log T)^{\beta_1}) \cdot T^{-\theta}, \qquad T \ge T_0.
\end{equation}
The idea is to iterate this inequality until $\log(\log(\dots)^{\beta_1})^{\beta_1}$ is smaller than some constant. As we will see, the number of iterations that are needed is very small and merely leads to a term of logarithmic order. Since each iteration is valid only for large values of $T$ depending on the number of iterations, and the number of iterations is itself a function of $T$, some care is needed to perform this step. To this end, fix $\beta_2 > \beta_1$ and set $T_0' := \max\set{\log(T_0)/\beta_2, \beta_2^{\beta_1/(\beta_2 - \beta_1)}}$.
Define $\log^{(1)} x = \log x$ for $x > x_1 = 1$ and $\log^{(i)}x = \log^{(i-1)}(\log x)$ for $x > x_i :=\exp(x_{i-1})$. For any $j \ge 1$ and $T > 0$, the following implication holds:
\begin{equation}\label{eq:iteration_g_with_log}
   \log^{(j+1)} T \ge T_0' \Longrightarrow g((\log^{(j)} T)^{\beta_2}) \ge g((\log^{(j+1)} T)^{\beta_2}) \cdot (\log^{(j)} T)^{-\theta \beta_2}.
\end{equation}
Indeed, note that $\log^{(j+1)} T \ge T_0'$ translates into 
\[
  (\log^{(j)} T)^{\beta_2} \ge T_0 \quad \text{ and } \quad \beta_2^{\beta_1} (\log^{(j+1)} T)^{\beta_1} \le  (\log^{(j+1)} T )^{\beta_2}.
\]
Hence, in view of \eqref{eq:iteration_g_simple}, we find that
\begin{align*}
   g((\log^{(j)} T)^{\beta_2}) &\ge g((\log( (\log^{(j)} T)^{\beta_2}))^{\beta_1}) \cdot (\log^{(j)} T)^{-\beta_2 \theta} \\
 &= g(\beta_2^{\beta_1} (\log^{(j+1)} T)^{\beta_1}) \cdot (\log^{(j)} T)^{-\beta_2 \theta} \\
&\ge g( (\log^{(j+1)} T )^{\beta_2}) \cdot (\log^{(j)} T)^{-\beta_2 \theta},
\end{align*}
so \eqref{eq:iteration_g_with_log} follows. Denote by $a(T) := \min\set{ n \in \mathbb{N} : \log^{(n)} T \le T_0'}$. By definition, $\log^{(a(T))} T \le T_0' < \log^{(a(T) - 1)} T$, so we can apply \eqref{eq:iteration_g_with_log} iteratively for all $j \le a(T) - 2$ to obtain that
\begin{align}
   g((\log T)^{\beta_2}) &\ge g((\log^{(2)} T)^{\beta_2}) (\log T)^{-\beta_2 \theta} \ge \dots \notag \\
&\ge g((\log^{(a(T) -1)} T)^{\beta_2}) \prod_{j=1}^{a(T) - 2} (\log^{(j)} T)^{-\beta_2 \theta} \notag\\
&\ge g(e^{T_0' \beta_2}) \prod_{j=1}^{a(T) - 2} (\log^{(j)} T)^{-\beta_2 \theta}, \label{eq:iteration_g_with_log_2}
\end{align}
which holds for all $T \ge \exp(\exp(\exp(T_0')))$, i.e.\ such that $a(T) \ge 3$.
Finally,
\begin{align*}
   \prod_{j=1}^{a(T)-2} (\log^{(j)} T)^{-\beta_2 \theta} \ge (\log T)^{-\beta_2 \theta} \cdot (\log^{(2)} T)^{-\beta_2 \theta a(T)}.
\end{align*}
In view of 
\[
  a(T) = 1 + a(\log T) = j + a(\log^{(j)} T),
\]
which holds for any $j \in \mathbb{N}$ and $T$ large enough and the simple observation that $a(T) \le T$, we obtain that $a(T) = o(\log^{(j)} T)$ for any $j \in \mathbb{N}$. Hence, for all $T$ large enough,
\begin{align}\label{eq:iteration_g_auxil}
   (\log^{(2)} T)^{-\beta_2 \theta \, a(T)} &\ge (\log^{(2)} T)^{-\beta_2 \theta \log^{(3)} T} = \exp \left( -\beta_2 \theta (\log^{(3)} T)^2 \right) \notag \\
&\ge \exp \left( - \log^{(2)} T \right) = (\log T)^{-1}.
\end{align} 
Combining \eqref{eq:iteration_g_simple}, \eqref{eq:iteration_g_with_log_2} and \eqref{eq:iteration_g_auxil}, we conclude that $T^{-\theta_1} \precsim g(T)$ for any $\theta_1 > \theta$.
\end{proof}
Combining Lemma~\ref{lem:FBM_barrier_upper_bound} and Lemma~\ref{lem:FBM_barrier_lower_bound}, we obtain part 1 of Proposition~\ref{prop:FBM_log_boundary}. \\
\begin{proof}[ of part 1 of Proposition~\ref{prop:FBM_log_boundary}]\\
\textit{Lower bound:} With $f(s) :=  Y_0 - Y_1 (\log (1 + s))^\gamma \ge -2 Y_1 (\log (1 + s))^\gamma$ for all large $s$, the lower bound follows directly from Lemma~\ref{lem:FBM_barrier_lower_bound}.\\
\textit{Upper bound:} If $\gamma > 1$, we can directly apply Lemma~\ref{lem:FBM_barrier_upper_bound} with $f(s) :=  Y_0 - Y_1 (\log (1 + s))^\gamma$ and $b = 1$ to obtain the upper bound. \\
If $\gamma = 1$, take $b > 0$ such that $b Y_1 > 1$ and set $f(s) :=  Y_0 - Y_1 (\log (1 + b^{-1/H}s))$, so that $\int_0^\infty e^{b f(s)} \, \d s < \infty$ and by Lemma~\ref{lem:FBM_barrier_upper_bound},
\[
    T^{-(1-H)} \succsim \pr{X_s \le f(b^{1/H}s), 0 \le s \le T} = \pr{Y_0 - Y_1 \log (1 + s), 0 \leq  s \leq T }.
\]
\end{proof}
\begin{remark}
\begin{enumerate}
   \item We remark that the removal of the boundary by a change of measure argument (Cameron-Martin-formula) results in less precise estimates of the form 
\[
   T^{-(1-H)} e^{-c\sqrt{\log T}} \precsim \pr{X_s\leq Y_0 - Y_1 (\log (1 + s))^\gamma, 0\leq  s\leq T} \precsim  T^{-(1-H)} e^{c\sqrt{\log T}},
\]
see \cite{aurzada-dereich:2011}, \cite{molchan:1999a} or \cite{molchan:2012}.
\item In view of the results for Brownian motion (i.e.\ $H=1/2$, see \cite{uchiyama:1980}), it is reasonable to expect that the upper bound in part 1 of Proposition~\ref{prop:FBM_log_boundary} has the correct order. 
\item The restriction $\gamma \ge 1$ is necessary in order to apply Lemma~\ref{lem:FBM_barrier_upper_bound}. However, for any $\gamma > 0$, \eqref{eq:survival_prob_FBM} immediately implies the following weaker bound:
\[
   \pr{X_s \le Y_0 - Y_1 (\log(1+s))^\gamma, 0 \le s \le T} \le \pr{X_s \le Y_0, 0 \le s \le T} \precsim T^{-(1-H)} (\log T)^c. 
\]
\item Let $f(x) = Y_0 - Y_1 \log(1+x)$. Trivially, if we consider discrete time, 
\[
   \pr{X_k \le f(k), k=1,\dots,N} \ge \pr{X_s \le f(s), 0 \le s \le N} \succsim N^{-(1-H)} \log(N)^{-c}.
\]
This estimate is needed in \cite{o-r-s:2012} (see Eq.\ (15) there) when proving a lower bound for $\E{J_N^k}$. 
\end{enumerate}
\end{remark}
Clearly, Lemma~\ref{lem:FBM_barrier_lower_bound} is only applicable if the boundary $f$ satisfies $f(x) \to -\infty$ as $x \to \infty$. It is natural to suspect that the persistence exponent does not change if we introduce a barrier that increase like some power of a logarithm.  This is part 2 of Proposition~\ref{prop:FBM_log_boundary} which follows from the next lemma:
\begin{lem}
   Let $f \colon [0,\infty) \to \mathbb{R}$ denote a measurable function such that there are $A,\delta > 0$ such that $f(x) \ge A$ for all $x \in [0,\delta]$ and $f(x) \ge -1/A$ for all $x \ge 0$. Moreover, we assume that there are $\alpha, T_0 > 0$ such that $f(x) \le (\log x)^{\alpha}$ for all $x \ge T_0$.
Then there is a constant $c > 0$ such that
\[
   T^{-(1-H)} (\log T)^{-c} \precsim \pr{X_s \leq f(s), 0 \le  s \le T} \precsim T^{-(1-H)} (\log T)^c.
\]
\end{lem}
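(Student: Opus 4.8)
The plan is to treat the two bounds separately, getting the lower bound essentially for free from Lemma~\ref{lem:FBM_barrier_lower_bound} and concentrating the work on the upper bound. For the lower bound I would not apply Lemma~\ref{lem:FBM_barrier_lower_bound} to $f$ directly (since $f$ need not be bounded above on $[0,T_0]$), but instead to the auxiliary function $\tilde f$ defined by $\tilde f(s) = A$ for $s \in [0,\delta]$ and $\tilde f(s) = -1/A$ for $s > \delta$. Then $\tilde f \le f$ pointwise, $\tilde f$ is measurable, locally bounded, positive in a vicinity of $0$, and trivially satisfies $\tilde f(s) = -1/A \ge -K(\log s)^\alpha$ for all large $s$. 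Hence Lemma~\ref{lem:FBM_barrier_lower_bound} applies to $\tilde f$, and since enlarging the barrier only increases the probability,
\[
\pr{X_s \le f(s), 0 \le s \le T} \ge \pr{X_s \le \tilde f(s), 0 \le s \le T} \succsim T^{-(1-H)}(\log T)^{-c}.
\]

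For the upper bound I would first replace $f$ by the constant $(\log T)^\alpha$ on the tail and discard the fixed-length initial piece: since $f(s) \le (\log s)^\alpha \le (\log T)^\alpha$ for $T_0 \le s \le T$, we have $\pr{X_s \le f(s), 0 \le s \le T} \le \pr{X_s \le (\log T)^\alpha, T_0 \le s \le T}$. The crux is then to remove the constant barrier $(\log T)^\alpha$ at the cost of only a polylogarithmic factor. To do this I would condition on the value of $X$ at the fixed time $T_0$ and split according to a threshold $M = M(T) := C\sqrt{\log T}$:
\[
\pr{X_s \le (\log T)^\alpha, T_0 \le s \le T} \le \pr{X_{T_0} < -M} + \pr{X_{T_0} \ge -M,\ X_s \le (\log T)^\alpha\ \forall s \in [T_0,T]}.
\]

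For the first term, $X_{T_0}$ is centred Gaussian with variance $T_0^{2H}$, so $\pr{X_{T_0} < -M} \le \exp(-M^2/(2T_0^{2H})) = T^{-C^2/(2T_0^{2H})}$, which is $\le T^{-(1-H)}$ once $C$ is chosen large enough (recall $T_0$ is fixed). For the second term I would use stationarity of increments: the process $W_t := X_{T_0+t} - X_{T_0}$ is again a FBM, and on $\{X_{T_0} \ge -M\}$ the constraint $X_{T_0+t} \le (\log T)^\alpha$ forces $W_t \le (\log T)^\alpha + M$. Dropping the information on $X_{T_0}$ and using $W \stackrel{d}{=} X$ gives
\[
\pr{X_{T_0} \ge -M,\ X_s \le (\log T)^\alpha\ \forall s \in [T_0,T]} \le \pr{X_t \le (\log T)^\alpha + M,\ 0 \le t \le T - T_0}.
\]
Self-similarity (rescaling time by $((\log T)^\alpha+M)^{1/H}$) converts the right-hand side into a constant-barrier survival probability $\pr{X_t \le 1,\ 0 \le t \le T'}$ with horizon $T' = (T-T_0)((\log T)^\alpha+M)^{-1/H} \asymp T(\log T)^{-\max(\alpha,1/2)/H}$, so that $T'$ differs from $T$ only by a polylogarithmic factor. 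Then \eqref{eq:survival_prob_FBM} yields $\pr{X_t \le 1, 0 \le t \le T'} \precsim (T')^{-(1-H)}(\log T')^c \precsim T^{-(1-H)}(\log T)^{c'}$, and combining the two terms gives the claimed upper bound.

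I expect the only genuinely delicate point to be the calibration of the threshold $M$ in the second term: it must be large enough to push the Gaussian tail $\pr{X_{T_0} < -M}$ below $T^{-(1-H)}$, yet small enough — polylogarithmic in $T$ — that after rescaling the removal of the barrier costs only a power of $\log T$ rather than a power of $T$. Everything else (bounding $f$, the two scaling identities, and the appeal to \eqref{eq:survival_prob_FBM}) is routine.
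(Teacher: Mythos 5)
Your proof is correct, but the upper bound follows a genuinely different route from the paper's. The paper makes the same first reduction, bounding the probability by $\pr{X_s \le (\log(2+T))^\alpha,\ T_0 \le s \le T}$, but then removes the initial segment $[0,T_0]$ via Slepian's inequality: since $\E{X_s X_t}\ge 0$, one has $\pr{X_s \le c,\ 0\le s\le T} \ge \pr{X_s \le c,\ 0\le s\le T_0}\,\pr{X_s \le c,\ T_0\le s\le T}$, and with $c=(\log(2+T))^\alpha$ the first factor tends to $1$, so the restricted probability is asymptotically dominated by the full-interval one; self-similarity and \eqref{eq:survival_prob_FBM} then finish. You instead condition at time $T_0$, split on $\{X_{T_0}<-M\}$ with $M=C\sqrt{\log T}$, kill that piece with a Gaussian tail bound, and on the complement use stationarity of increments to pass to a fresh FBM with the barrier enlarged to $(\log T)^\alpha+M$ --- still polylogarithmic, so after rescaling, \eqref{eq:survival_prob_FBM} gives the same conclusion; all steps (the set inclusion needing no independence, the identity $(X_{T_0+t}-X_{T_0})_t \stackrel{d}{=} (X_t)_t$, the calibration of $C$) check out. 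Your argument is longer but more robust: it uses no positive-correlation (Slepian) input for the upper bound, only stationary increments and Gaussian one-dimensional marginals, so it would carry over to settings where comparison inequalities are unavailable; the paper's argument is shorter and stays within the Gaussian comparison toolbox it uses throughout. On the lower bound you are in fact slightly more careful than the paper: the paper applies Lemma~\ref{lem:FBM_barrier_lower_bound} to $f$ directly, even though $f$ is not assumed locally bounded (it could be unbounded above on $(\delta,T_0)$), while your replacement of $f$ by the minorant $\tilde f$ and monotonicity of the event in the barrier closes this harmless gap.
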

\begin{proof}
   \textit{Lower bound:} 
Note that we can directly apply Lemma~\ref{lem:FBM_barrier_lower_bound} directly since $f(x) \ge A$ on $[0,\delta]$ and $f(x) \ge -1/A$ on $[0,\infty)$.\\ 
\textit{Upper bound:} Note that
\begin{align*}
   \pr{X_s \le f(s), 0 \le s \le T} &\le \pr{X_s \le f(s), T_0 \le s \le T} \\
&\le \pr{X_s \le (\log s)^\alpha, T_0 \le s \le T} \\
&\le \pr{X_s \le (\log (2+T))^\alpha, T_0 \le s \le T} \\
&\le \frac{ \pr{X_s \le (\log (2+T))^\alpha, 0 \le s \le T}}{ \pr{X_s \le ( \log (2+T) )^\alpha, 0 \le s \le T_0}} \\
&\sim \pr{X_s \le ( \log (2+T) )^\alpha, 0 \le s \le T}, \quad T \to \infty.
\end{align*}
We have used Slepian's inequality in the last inequality. Using once more the self-similarity and \eqref{eq:survival_prob_FBM}, the upper bound follows.
\end{proof}
\section{Proof of Proposition~\ref{prop:moment_disorder_current_functional}} \label{sec:proof_fbm_moment_of_funct}
We are now ready to prove Proposition~\ref{prop:moment_disorder_current_functional}. The lower bound follows easily from our result on moving boundaries in Proposition~\ref{prop:FBM_log_boundary}, whereas the proof of the upper bound is more involved. 
\begin{proof}[ of Proposition~\ref{prop:moment_disorder_current_functional}] \\
\textit{Lower bound:} Let $\gamma > 1$.
 \begin{align*}
      \E{J_T^k} &\ge \E{ \left( \int_0^T e^{X_s} \d s \right)^{-k} ; \lbrace X_s \le 1 - (\log(1+s))^\gamma, \, \forall s \in [0,T] \rbrace } \\
&\ge  \left( \int_0^T \frac{e}{(1+s)^\gamma} \d s \right)^{-k}  \pr{X_s \le 1 - (\log(1+s))^\gamma, \, \forall s \in [0,T]}.
 \end{align*}
The lower bound now follows by part 1 of Proposition~\ref{prop:FBM_log_boundary}.\\
\textit{Upper bound:} Let $H/2 < \gamma < H$ and fix $a$ such that $a > 2/H > 1/\gamma$ and $\gamma < H -1/a$. Self-similarity and stationarity of increments imply for all $s,t \in [0,1]$ that
\[
   \E{\abs{X_t - X_s}^a} = \E{\abs{X_{\abs{t-s}}}^a} = \abs{t-s}^{aH} \E{\abs{X_1}^a} = \abs{t-s}^{(aH-1) + 1} \E{\abs{X_1}^a}.
\]
Since $aH-1 > 0$, it follows from \cite[Lemma~2.1]{scheutzow:2009} that there is a positive random variable $S$ such that
\begin{equation}\label{eq:expect_Hoelder_constant}
   \E{S^a} \le \left( \frac{2}{1-2^{-\gamma}} \right)^a \cdot \frac{\E{\abs{X_1}^a}}{2^{aH-1-a\gamma} - 1},
\end{equation}
and for all $\epsilon \in (0,1)$,
\begin{equation}\label{eq:Hoelder_cont}
    \abs{X_t - X_s} \le S \epsilon^\gamma, \qquad \forall \, s,t \in [0,1], \abs{t-s} \le \epsilon.
\end{equation}
Write $X^*_1 := \sup \set{X_t : t \in [0,1]}$, and let $u^*$ denote a point where the supremum is attained. Using the self-similarity of $X$ and \eqref{eq:Hoelder_cont} in the second inequality, we obtain the following estimates:
\begin{align*}
   \E{J_T^k} &= \E{ \left( \int_0^1 e^{T^H X_s} T \d s \right)^{-k} } \\
&= T^{-k} \, \E{ e^{-kT^H X_1^*} \, \left( \int_0^1 e^{-T^H (X_1^* - X_s)} \d s \right)^{-k}  } \\
&\le T^{-k} \, \E{ e^{-kT^H X_1^*} \, \left( \int_{\min\set{u^* - \epsilon,0}}^{\max\set{u^* + \epsilon,1}} e^{-T^H (X_{u^*} - X_s)} \d s \right)^{-k}  } \\
&\le T^{-k} \, \E{ e^{-kT^H X_1^*} \, \left( \int_{\min\set{u^* - \epsilon,0}}^{\max\set{u^* + \epsilon,1}} e^{-T^H S \epsilon^\gamma} \d s \right)^{-k}  } \\
&\le T^{-k} \, \E{ e^{-kT^H X_1^*} \epsilon^{-k} e^{k T^H S \epsilon^\gamma} }.
\end{align*}
Set $\epsilon := \min\set{(T^H S)^{1/\gamma},1}$. Then $T^H S \epsilon^\gamma \le 1$ and $\epsilon^{-1} \le (T^H S)^{1/\gamma} + 1$, and we find that
\begin{align}
    \E{J_T^k} &\le T^{-k} e^k \E{ e^{-kT^H X_1^*} ( (T^H S)^{1/\gamma} + 1 )^k } \notag \\
&\le T^{-k} (2e)^k \left( \E{ e^{-kT^H X_1^*} S^{k/\gamma} } T^{kH/\gamma}  + \E{e^{-kT^H X_1^*}} \right). \label{eq:int_func_prelim}
\end{align}
Applying H\"{o}lder's inequality ($1/p + 1/q = 1$), we have that 
\begin{equation}\label{eq:Hoelder_estimate}
   \E{ e^{-kT^H X_1^*}  S^{k/\gamma} } \le \E{ e^{-q kT^H X_1^*}}^{1/q}  \E{S^{a} }^{1/p} \le \E{ e^{- kT^H X_1^*}}^{1/q}  \E{S^{a} }^{1/p}, 
\end{equation}
where $a = kp/\gamma$, and the last inequality holds for all $T > 0, a > 2/H$ and $H/2 < \gamma < H - 1/a$. Fix $\delta \in (0,1)$ and set
\[
   a := (\log \log T)^{-\delta} \, \log T ,\quad \gamma = H - 2/a.
\]
(Since $a = kp/\gamma$, this amounts to $p = ( H (\log \log T)^{-\delta} \, \log T + 2) / k$, $a = (kp - 2)/H$ and $\gamma = H - 2/a$).
Assume for a moment that there are constants $M,\nu \in (0,\infty)$ such that for all $a$ large enough, it holds that
\begin{equation}\label{eq:moment_estimate_SSSI}
   (\E{\abs{X_1}^a})^{1/a} \le M a^\nu.
\end{equation}
Then in view of the relations $1/p = k/(a \gamma)$ and $aH - a\gamma = 2$, we obtain that
\[
   \E{S^{a} }^{1/p} \le \frac{\left( (\E{\abs{X_1}^a})^{1/a} \right)^{k/\gamma} }{(2^{aH - 1 - a\gamma} - 1)^{1/p}} = \frac{M^{k/\gamma} a^{\nu k/\gamma} }{(2^{2 - 1} - 1)^{1/p}} = M^{k/\gamma} a^{k\nu/H + o(1)}, \quad a \to \infty.
\]
In particular, $(\E{S^{a}})^{1/p} = o((\log T)^{\eta})$ as $a \to \infty$, or equivalently, $T \to \infty$, for every $\eta > k \nu /H$. For such $\eta$, combining \eqref{eq:int_func_prelim} and \eqref{eq:Hoelder_estimate}, we find for $T$ large enough that
\begin{equation}\label{eq:upper_bound_SSSI}
   \E{J_T^k} \le 2 (2e)^k T^{kH/\gamma - k} (\log T)^\eta \E{e^{-kT^H X_1^*}}^{1/q}.
\end{equation}
By Karamata's Tauberian theorem (see \cite[Theorem~1.7.1]{b-g-t}), \eqref{eq:small_value_max_FBM} implies that (with the same $c > 0$ as in \eqref{eq:small_value_max_FBM})
\begin{equation}\label{eq:Laplace_FBM}
   \lambda^{-(1-H)/H} (\log \lambda )^{-c} \precsim \E{ e^{- \lambda X_1^*}} \precsim \lambda^{-(1-H)/H} (\log \lambda )^{c}, \qquad \lambda \to \infty.
\end{equation}
(In fact, the lower bound is easy since $\E{ e^{- \lambda X_1^*}} \ge e^{-1} \pr{X_1^* \le 1/\lambda}$. For our purposes, it is enough to note that $\E{ e^{- \lambda X_1^*}} \le \pr{X_1^* \le \log(\lambda)/\lambda} + e^{-\log \lambda}$, so the upper bound follows from \eqref{eq:small_value_max_FBM} with some $\tilde{c} > c$.) By \eqref{eq:Laplace_FBM}, we conclude that
\begin{align*}
  T^{Hk/\gamma - k} \E{ e^{- kT^H X_1^*}}^{1/q} &= T^{Hk/\gamma - k} \E{ e^{-kT^H X_1^*}}^{1 - k/(a \gamma)} \\
&\le C' T^{Hk/\gamma - k} T^{-(1-H)(1 - k/(a \gamma)) } (\log T)^{c(1 - k/(a \gamma))} \\
&= C' T^{k/(a\gamma) (aH - a\gamma + (H-1))} T^{-(1-H)} (\log T)^{c + o(1)}.
\end{align*}
Using again that $aH - a\gamma = 2$, note that by definition of $a$,
\[
   T^{k/(a\gamma) (Ha - a\gamma + (H-1))} = T^{(H+1)/(a\gamma)} = \exp( \gamma^{-1}(H+1) (\log\log T)^\delta ) = (\log T)^{o(1)}.
\]
Hence, we have shown that
\[
   \E{J_T^k} \precsim (\log T)^{\eta + o(1)} T^{-(1-H)}, \quad T \to \infty,
\]
as soon as we prove that \eqref{eq:moment_estimate_SSSI} holds. Since $X_1$ is standard Gaussian, it is well-known that $\E{\abs{X_1}^a} = 2^{a/2} \Gamma((a+1)/2)/\sqrt{\pi}$ for every $a > 0$, and therefore, it is not hard to show that $\E{\abs{X_1}^a}^{1/a} \le M \sqrt{a}$ for some $M$ and all $a$ large enough. This completes the proof.
\end{proof}
\begin{remark}
   Note that if $X$ is a self-similar process with stationary increments (SSSI) satisfying \eqref{eq:moment_estimate_SSSI}, the proof above shows that \eqref{eq:upper_bound_SSSI} holds in that case as well. By \eqref{eq:upper_bound_SSSI}, if we already know a lower bound on $\E{J_T^k}$, we get a lower bound on the Laplace transform of $X_1^*$, whereas a upper bound on the Laplace transform yields an upper bound on $\E{J_T^k}$. Since the behaviour of the Laplace transform $\E{\exp(-\lambda X_1^*)}$ as $\lambda \to \infty$ is related to that of the probability $\pr{X_1^* \le \lambda}$ as $\lambda \downarrow 0$ via Tauberian theorems, this approach could be useful to study persistence of other SSSI-processes. 
\end{remark}

\section{Some related quantities}\label{sec:fbm_rel_quant}
Given a FBM $X$, the following quantities are studied in \cite{molchan:1999a}:
\begin{align}
   \tau_{\max} & := \mathrm{argmax}_{t \in [0,1]} X_t, \label{eq:time_of_max}\\
  z_{-} & := \sup{ \set{ t \in (0,1): X_t =0 } }, \label{eq:last_zero}\\
s_+ & := \lambda(\set{t \in (0,1) : X_t > 0}), \label{eq:leb_meas_of_time_greater_zero}
\end{align}
where $\lambda$ denotes the Lebesgue measure. We remark that the definition of $\tau_{\max}$ is unambiguous since a FBM attains its maximum at a unique point on $[0,1]$ a.s.\ (\cite[Lemma~2.6]{kim-pollard:1990}). If $\xi$ denotes any of the three r.v.\ above, by \cite[Theorem~2]{molchan:1999a}, there is $c > 0$ such that
\[
  \epsilon^{1-H} \exp(-(1/c) \sqrt{  \abs{\log \epsilon} }) \precsim \pr{\xi < \epsilon} \precsim \epsilon^{1-H} \exp(c\sqrt{\abs{\log \epsilon}}), \qquad \epsilon \downarrow 0.
\]
Upon combining our results (Proposition~\ref{prop:FBM_log_boundary}), the arguments used in \cite{molchan:1999a} and the more precise estimate for the persistence probability of FBM in \cite{aurzada:2011}, we obtain the following improvement:
\begin{prop}\label{prop:FBM_related_quantities}
   If $\xi$ denotes any of the random variables in \eqref{eq:time_of_max}, \eqref{eq:last_zero} or \eqref{eq:leb_meas_of_time_greater_zero}, there is $c > 0$ such that
\begin{equation}\label{eq:lower_tail_rel_quant}
  \epsilon^{1-H} \abs{\log \epsilon}^{-c} \precsim \pr{\xi < \epsilon} \precsim \epsilon^{1-H} \abs{\log \epsilon}^c, \qquad \epsilon \downarrow 0.
\end{equation}
\end{prop}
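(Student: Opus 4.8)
The plan is to follow Molchan's reductions of these three small-value events to persistence probabilities of FBM, but to replace at each step his Cameron--Martin boundary removal (which, as noted earlier, only yields losses of order $e^{\pm c\sqrt{\log T}}$) by the sharper moving-boundary estimates of Proposition~\ref{prop:FBM_log_boundary} together with the polylogarithmic persistence bound \eqref{eq:survival_prob_FBM}. The common starting point is self-similarity: writing $T = 1/\epsilon$ and rescaling time by $T$, each event $\{\xi < \epsilon\}$ becomes a statement about a single FBM on $[0,T]$. For instance, with $M_1 := \sup_{t \in [0,1]} X_t$ one has $\{\tau_{\max} < \epsilon\} = \{X_s - X_1 \le M_1 - X_1,\ 1 \le s \le T\}$; and $\{z_- < \epsilon\}$ becomes the event that $X$ has no zero in $(1,T)$, which by the $X \mapsto -X$ symmetry has probability $2\,\pr{X_s \le 0,\ 1 \le s \le T}$, i.e.\ by stationarity of increments a persistence event for the FBM $W_u := X_{1+u} - X_1$ below the \emph{random} level $-X_1$. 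In each case we are thus led to a persistence probability of an FBM on an interval of length $\asymp T$ below a random boundary whose typical size is $O(1)$ but whose fluctuations are Gaussian.

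For the upper bounds I would first peel off the Gaussian tail of the random level. Since $M_1 - X_1 \le 2\sup_{t\in[0,1]}|X_t|$ and $|X_1|$ have Gaussian tails, the random level exceeds $\ell := A\sqrt{\log T}$ only with probability at most $T^{-(1-H)}$ once $A = A(H)$ is large enough. On the complementary event the persistence boundary is dominated by the deterministic level $\ell$, and self-similarity converts $\pr{X_s \le \ell,\ 0 \le s \le T}$ into $\pr{X_s \le 1,\ 0 \le s \le T\ell^{-1/H}}$; feeding this into \eqref{eq:survival_prob_FBM} yields $T^{-(1-H)}(\log T)^{c} = \epsilon^{1-H}|\log\epsilon|^{c}$. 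Wherever the reduction instead leaves a genuinely time-dependent boundary of logarithmic type, the upper bounds of Proposition~\ref{prop:FBM_log_boundary} are applied in place of self-similarity.

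For the lower bounds I would restrict to a favourable configuration on $[0,1]$ that both forces the random level to be at least a fixed constant $c_1 > 0$ and is expressible through one-sided linear constraints on the Gaussian path: e.g.\ $\{X_{1/2} - X_1 \ge c_1\}$ guarantees $M_1 - X_1 \ge c_1$, while for $z_-$ the constraint $\{X_1 \le -c_1\}$ guarantees $-X_1 \ge c_1$. The event then contains $\{W_u \le c_1,\ 0 \le u \le T\}$ intersected with one extra $\le$-constraint, and the main factor $\pr{W_u \le c_1,\ 0 \le u \le T}$ is, by self-similarity and \eqref{eq:survival_prob_FBM} (or directly by Proposition~\ref{prop:FBM_log_boundary} when a logarithmic boundary survives), of order $T^{-(1-H)}(\log T)^{-c}$. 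The extra constraint is incorporated without spoiling the rate by Slepian's lemma when $H \ge 1/2$, and, when $H < 1/2$, by the device based on \cite[Lemma~5]{aurzada:2011} already used in the proof of Lemma~\ref{lem:FBM_barrier_lower_bound}.

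The main obstacle is twofold. First, because FBM has long-range dependence, the random boundary (determined by the path on $[0,1]$) and the persistence event (determined by the path on $[1,T]$) are genuinely correlated, and this dependence must be controlled by a one-sided correlation inequality whose available form differs according to the sign of $H - 1/2$; this is precisely the point where Molchan's change-of-measure argument is lossy and where Proposition~\ref{prop:FBM_log_boundary} is needed to keep the error polylogarithmic rather than $e^{\pm c\sqrt{\log T}}$. Second, the occupation time $s_+$ does not unfold directly into a single persistence event, so for this variable I expect to follow Molchan's more delicate comparison relating $s_+$ to $\tau_{\max}$ and $z_-$, and the work will be to verify that each comparison step costs only a factor $(\log T)^{\pm c}$ once the sharp inputs \eqref{eq:survival_prob_FBM} and Proposition~\ref{prop:FBM_log_boundary} are substituted for his original estimates.
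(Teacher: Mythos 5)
Your upper bounds are sound and essentially the paper's: peeling off the Gaussian tail of the random level at height $\ell=A\sqrt{\log T}$ and then applying \eqref{eq:survival_prob_FBM} after self-similar rescaling is a valid variant of the paper's splitting $\pr{\tau_{\max}<\epsilon}\le\pr{X_1^*<h}+\pr{X_\epsilon^*>h}$ with $h=\epsilon^H\abs{\log\epsilon}^\alpha$, $\alpha>1/2$, where Gaussian concentration kills the second term. Likewise, your plan for $s_+$ (rerun Molchan's comparison and check that every step costs only polylogarithmic factors) is exactly what the paper does, quoting that Molchan's argument yields $\pr{s_+<\epsilon}\le 2\pr{X_{1/\epsilon}^*<c\abs{\log\epsilon}^{1/2}}$ and then invoking \eqref{eq:small_value_max_FBM}.

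The genuine gap is in your lower bounds for $H<1/2$. Your event is an intersection of the persistence event for the increment process $W_u=X_{1+u}-X_1$ below the constant $c_1$ with an extra one-sided constraint on a \emph{past} increment ($X_1-X_{1/2}\le -c_1$, resp.\ $X_1\le -c_1$). For $H\ge 1/2$ Slepian indeed factorizes this, since increments over disjoint intervals are nonnegatively correlated. But for $H<1/2$ you appeal to ``the device based on \cite[Lemma~5]{aurzada:2011}'', and that device, as used in the proof of Lemma~\ref{lem:FBM_barrier_lower_bound}, lower-bounds probabilities of the specific form $\pr{X_s\le -K,\ s_0\le s\le T}$ --- a single FBM below a negative constant away from the origin. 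It says nothing about an intersection of a persistence event for $W$ with a constraint on a past increment; for $H<1/2$ those two events are negatively correlated, and this correlation is precisely the difficulty you correctly identify but do not actually resolve. The fix is not to patch the device but to drop the increment/random-level formulation for the lower bounds entirely, because the random levels are automatically nonnegative: since $X_0=0$, one has $\sup_{t\in[0,1]}X_t\ge 0$, so the single event $\set{X_t<0,\ \epsilon<t<1}$ is contained in each of $\set{\tau_{\max}\le\epsilon}$, $\set{z_-\le\epsilon}$, $\set{s_+\le\epsilon}$, with no condition on the path in $[0,1]$ needed. Then, by self-similarity,
\[
\pr{X_t<0,\ \epsilon<t<1}=\pr{X_t<0,\ 1<t<1/\epsilon}\ \ge\ \pr{X_t\le 1-\log(1+3t),\ 0\le t\le 1/\epsilon},
\]
since $1-\log(1+3t)<0$ for $t\ge 1$; and because $t\mapsto 1-\log(1+3t)$ is positive near $0$ and decays only logarithmically, Lemma~\ref{lem:FBM_barrier_lower_bound} applies and gives the bound $\succsim\epsilon^{1-H}\abs{\log\epsilon}^{-c}$ for all $H\in(0,1)$ at once. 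This is the paper's route: the entire $H<1/2$ difficulty is already encapsulated inside Lemma~\ref{lem:FBM_barrier_lower_bound} and Proposition~\ref{prop:FBM_log_boundary}, so no case distinction on $H$ is needed at the level of this proposition.
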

\begin{proof}
Let us recall the relations of the probabilities involving the quantities $\tau_{\max},s_+$ and $z_-$ that are used in the proof of Theorem~2 of \cite{molchan:1999a}:\\
The symmetry and continuity of $X$ imply that
\begin{equation}\label{eq:FBM_related_quant_auxil_1}
  \pr{X_t < 0, \epsilon < t < 1} = \tfrac{1}{2} \, \pr{X_t \neq 0, \epsilon < t < 1} = \tfrac{1}{2} \, \pr{z_- < \epsilon}, \quad 0 < \epsilon <1.
\end{equation}
   Moreover, we clearly have the following inequalities:
\begin{equation}\label{eq:FBM_related_quant_auxil_2}
   \pr{X_t < 0, \epsilon < t < 1} \le \pr{s_+ < \epsilon}, \quad \pr{X_t < 0, \epsilon < t < 1} \le \pr{\tau_{\max} < \epsilon}.
\end{equation}
We will show that 
\begin{equation}\label{eq:FBM_related_quant_auxil_3}
  \epsilon^{1-H} \abs{\log \epsilon}^{-c} \precsim \pr{X_t < 0, \epsilon < t < 1} \precsim \epsilon^{1-H} \abs{\log \epsilon}^{c}.
\end{equation}
If \eqref{eq:FBM_related_quant_auxil_3} holds, \eqref{eq:FBM_related_quant_auxil_1} proves the statement for $z_-$ whereas the lower bounds in \eqref{eq:lower_tail_rel_quant} for $\xi = s_+$ and $\xi = \tau_{\max}$ follow from \eqref{eq:FBM_related_quant_auxil_2}. \\
Before establishing the remaining lower bound, let us prove \eqref{eq:FBM_related_quant_auxil_3}. Note that the self-similarity of $X$ implies that $\pr{X_t < 0, \epsilon < t < 1} = \pr{X_t < 0, 1 < t < 1/\epsilon}$. By Slepian's inequality, it holds that
\begin{align*}
   \pr{X_t < 0, 1 < t < 1/\epsilon} &\le \pr{X_t < 1, 1 < t < 1/\epsilon} \\
&\le \pr{X_t < 1, 0 < t < 1/\epsilon}/ \pr{X_t < 1, 0 < t < 1}.
\end{align*}
In view of \eqref{eq:survival_prob_FBM}, this proves the upper bound of \eqref{eq:FBM_related_quant_auxil_3}. The lower bound follows from part 2 of Proposition~\ref{prop:FBM_log_boundary} since
\[
   \pr{X_t < 0, 1 < t < 1/\epsilon} \ge \pr{X_t \le 1 - \log(1+ 3 t), 0 \le t \le 1/\epsilon}. 
\]
Let us now turn to the upper bound for $\pr{\tau_{\max} < \epsilon}$. Note that
\[
   \pr{\tau_{\max} < \epsilon} \le \pr{X_1^* < h} + \pr{X_\epsilon^* > h}, \qquad h > 0.
\]
Take $h = \epsilon^H \abs{\log \epsilon}^{\alpha}$ where $\alpha > 1/2$. Using \eqref{eq:small_value_max_FBM}, we obtain that
\[
   \pr{X_1^* < h} = \pr{X_1^* < \epsilon^H \abs{\log \epsilon}^{\alpha} } \precsim \epsilon^{1-H} \abs{\log \epsilon}^{\alpha(1 - H)/H + c + o(1)},
\]
whereas, for some constants $A,B > 0$, an application of the Gaussian concentration inequality (or Fernique's estimate stated in \cite{molchan:1999a}) yields that
\[
   \pr{X^*_\epsilon > h} = \pr{X_1^* > \epsilon^{-H} h} = \pr{X_1^* > \abs{\log \epsilon}^{\alpha}} \le A e^{-B \abs{\log \epsilon}^{2\alpha}},
\]
i.e. this term decays faster than any polynomial since $2\alpha > 1$. \\
Finally, to establish the upper bound on $\pr{s_+ < \epsilon}$, it suffices to note that the arguments in the proof of Theorem~2 of \cite{molchan:1999a} show that there is a constant $c$ such that $\pr{s_+ < \epsilon} \le 2 \pr{X_{1/\epsilon}^* < c \abs{\log \epsilon}^{1/2}}$ for all $\epsilon > 0$ small enough. It is now straightforward to conclude in view of the self-similarity and \eqref{eq:small_value_max_FBM}.
\end{proof}
\begin{remark}
   As already remarked in \cite{molchan:1999a}, $1/z_- \stackrel{d}{=} z_+ := \inf \set{s > 1: X_s = 0}$ since $(X_t)_{t > 0}$ and $(t^{2H} X_{1/t})_{t > 0}$ have the same law. Hence, Proposition~\ref{prop:FBM_related_quantities} shows that $\pr{z_+ > T}$ decays like $T^{-(1-H)}$ modulo logarithmic terms as $T \to \infty$.
\end{remark}

\bibliographystyle{amsalpha}
\bibliography{bib_FBM}
\end{document}